\newtheorem{theo}{Theorem}
\newtheorem{lemm}[theo]{Lemma}
\theoremstyle{remark}
\newtheorem{rema}[theo]{\bf Remark}
\begin{document} 
\title{Description of Origamis by Schottky groups} 
\date{}

\author{Rub\'en A. Hidalgo} 
\thanks{Partially supported by Project Fondecyt 1230001} 
\keywords{Riemann Surfaces, Origamis, Schottky Groups} 
\subjclass[2010]{30F10, 30F40}
\address{Departamento de Matem\'atica y Estad\'{\i}stica, Universidad de La Frontera, Temuco, Chile} 
\email{ruben.hidalgo@ufrontera.cl}

\begin{abstract}
Let $(S,\eta)$ be an origami pair, that is, $S$ is a closed Riemann surface of genus $g \geq1$ and $\eta:S \to E$ is a holomorphic branched covering, with at most one branch value, where $E$ is a genus one Riemann surface. As the lowest uniformizations of $S$ are provided by Schottky groups, we are interested in describing origami pairs in terms of virtual Schottky groups. In other words, we are interested in those Kleinian groups $K$ which contain, as a finite index subgroup, a Schottky group $\Gamma$ such that 
$S=\Omega/\Gamma$ and such that $\eta$ is induced by the inclusion $\Gamma \leq K$. We say that $K$ is an origami-Schottky group.
We provide a geometrical structural picture, in terms of the Klein-Maskit combination theorems, of these origami-Schottky groups. 
\end{abstract}

\maketitle

\section{Introduction}
An origami is a combinatorial way of assembling squares (called the faces) to obtain a closed topological (connected, second countable, and orientable) surface $X$ of some genus $g \geq 1$. These objects were introduced by W. Thurston in his classification of diffeomorphisms of surfaces  \cite{Thurston}  and later considered by W. Veech in \cite{Veech}. Some references on generalities on origamis are given by  \cite{Lochak,Gabi1,Gabi2}. Let $T^{2}=S^{1} \times S^{1}$ (a surface of genus one). 
An origami might be thought of a pair $(X,\eta)$, where $\eta:X \to T^{2}$ is a topological branched cover branched in at most one point (with a branch value if and only if $g \geq 2$) and whose degree is equal to the number of faces of the origami (and conversely, each topologically branched cover as above defines an origami). 
If we fix a Riemann surface structure $E$ on the torus $T^{2}$ and we lif it under $\eta$, then we obtain a Riemann surface structure $S$ on $X$ making $\eta:S \to E$ a  holomorphic branched covering with at most one branch value. In this case, we say that $S$ is an {\it origami curve}, that $\eta$ is an {\it origami map}, and that $(S,\eta)$ an {\it origami pair}. This point of view permits to obtain a holomorphic embedded curve in the moduli space ${\mathcal M}_{g}$ of genus $g$ Riemann surfaces which are geodesic for the associated Teichm\"uller metric and that can be defined by algebraic equations with coefficients in $\overline{\mathbb Q}$ (i.e., a Belyi curve, equivalently, a Grothendieck's dessin d'enfant)).
If $\eta$ is a regular branched covering, then the origami pair is called {\it regular}. In this case, if $G<{\rm Aut}(S)$ is the deck group of $\eta$ and $S$ has genus $g \geq 2$, then 
by the Riemann-Hurwitz formula  $|G|\leq 4(g-1)$ \cite{SW}; if $|G|=4(g-1)$, then the origami curve is also known as a {\it Hurwitz translation surface}.

Let $(S,\eta)$ be an origami pair of genus $g \geq 2$,  $p \in E$ be the branched value of $\eta:S \to E$, and $n \geq 2$ be the branching order of $p$. Let us denote by ${\mathcal O}$ the Riemann orbifold whose underlying Riemann surface is $E$ and has a unique cone point at $p$ or cone order $n$.

The highest uniformizations of $S$ are provided by the co-compact torsion-free Fuchsian groups of genus $g$, and the lowest ones are provided by the Schottky groups of rank $g$ (i.e., a purely loxodromic Kleinian group with a non-empty region of discontinuity, and isomorphic to the free group of rank $g$) \cite{Maskit:Schottky groups, Maskit:book}.

The description of the above origami pair, in terms of Fuchsian groups, is well known. By the uniformization theorem, there is a 
a Fuchsian group $F_{n}=\langle A,B: [A,B]^{n}=1\rangle$ (where $A$ and $B$ are hyperbolic elements and $[A,B]=ABA^{-1}B^{-1}$) acting on the hyperbolic plane ${\mathbb H}^{2}$ such that ${\mathbb H}^{2}/F_{n}={\mathcal O}$. The map $\eta$ is induced by a finite index subgroup $\Gamma$ of $F_{n}$ (of index equal to the degree of $\eta$) such that the Riemann orbifold ${\mathbb H}^{2}/\Gamma$ has underlying Riemann surface structure $S$ ($\Gamma$ might have torsion). So, in terms of Fuchsian groups, origami pairs correspond to pairs $(F_{n},\Gamma)$, where $n \geq 2$ and $\Gamma$ is a finite index subgroup of $F_{n}$ (the origami pair is regular if and only if $\Gamma$ is a finite index torsion-free normal subgroup of $F_{n}$). 

In this paper, we are interested in describing origamis in terms of Schottky groups. For this, we need to consider those Kleinian groups $K$, with a non-empty region of discontinuity $\Omega$, such that: (i) the quotient Riemann orbifold ${\mathcal O}_{K}=\Omega/K$ has genus one and has exactly one cone point of order $n$, and (ii) $K$ contains a Schottky group $\Gamma$ as a finite index subgroup. We say that $K$ is an {\it origami-Schottky group of type $n \geq 2$}. Note that
as $\Gamma$ has a finite index in $K$, both have the same region of discontinuity and, in this case, $S_{\Gamma}=\Omega/\Gamma$ is an origami curve,  the inclusion $\Gamma \leq K$ induces 
an origami map $\eta_{K,\Gamma}:\Omega/\Gamma \to \Omega/K$, and $(S_{\Gamma},\eta_{K,\Gamma})$ is an origami pair.

The origami pair $(S,\eta)$ is said to be of {\it Schottky type} if there is an origami-Schottky group $K$, containing a finite index Schottky subgroup $\Gamma$, such that $(S,\eta)$ is equivalent to $(S_{\Gamma},\eta_{K,\Gamma})$, that is, if there are biholomorphisms $\varphi:S \to S_{K}$ and $\psi:E \to E_{K}$ such that $\eta_{K.\Gamma} \circ \varphi=\psi \circ \eta$. We should note that there are origami pairs that are not of Schottky type (see Remark \ref{obs3}).

 As an origami-Schottky group $K$ is a particular example of a function group, the general classification, due to Maskit  \cite{Maskit:function2, Maskit:function3, Maskit:function4}, asserts that $K$ can be constructed, in terms of the Klein-Maskit combination theorems  \cite{Maskit:Comb, Maskit:Comb4, Maskit:book}, by using of the so-called basic groups. As $K$ is geometrically finite and does not have parabolic transformations (since the Schottky groups $\Gamma$ is purely loxodromic and of a finite index), in the construction, there are only used finite groups and/or cyclic loxodromic groups. This paper aims to make this structural description for origami-Schottky groups in a more explicit form.

\begin{theo}[Structural description of origami-Schottky groups]\label{const0}
Let $n \geq 2$ be an integer.
\begin{enumerate}[leftmargin=*,align=left]
\item If $K$ is an origami-Schottky group of type $n$, then (up to conjugation by a suitable M\"obius transformation) one of the following holds. (see Figure \ref{figura1})

\begin{enumerate}[leftmargin=*,align=left]
\item[(a)] The group $K$ is 
constructed from the Klein-Maskit combination theorems as an HNN-extension of the dihedral group 
$D_{n}=\langle A(z)=e^{2 \pi/n}z, B(z)=1/z\rangle$
by a cyclic group $\langle T \rangle$, where $T$ is a loxodromic transformation conjugating $B$ into $AB$. That is, $A=[T,B]=TBT^{-1}B$ and
$K=\langle B,T: B^{2}=[T,B]^{n}=([T,B]B)^{2}=1\rangle$.

\item[(b)] If $n = 2$, then $K$ is either constructed as above or it can be 
constructed from the Klein-Maskit combination theorems as an HNN-extension of the alternating group 
${\mathcal A}_{4}=\langle A(z)=i(1-z)/(z+1), B(z)=-z\rangle$
by a cyclic group $\langle T \rangle$, where $T$ is a loxodromic transformation commuting with $A$. That is,
$K=\langle A,B,T: A^{3}=B^{2}=(BA)^{3}=[T,A]=1\rangle$.

\end{enumerate}

\item Every Kleinian group constructed as in (1) is an origami-Schottky group of type $n$.

\end{enumerate}
\end{theo}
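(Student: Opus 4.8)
The plan is to set up a dictionary between Klein--Maskit combinations of $K$ and decompositions of the quotient orbifold $E$, and to read it constructively for part $(2)$ and diagnostically for part $(1)$. Throughout, note the common features: since $\Gamma<K$ has finite index and $\Gamma$ is Schottky, the region of discontinuity $\Omega$ is connected and $K$-invariant, so $K$ is a function group that is \emph{virtually free}; and $E=\Omega/K$ has signature $(1;n)$, hence $\chi^{\mathrm{orb}}(E)=2-2-(1-1/n)=(1-n)/n<0$. Because $\Omega/K$ is the conformal boundary of the hyperbolic $3$-orbifold $({\mathbb H}^{3}\cup\Omega)/K$, its orbifold Euler characteristic equals $2\chi(K)$ for the rational Euler characteristic $\chi$; by additivity over graphs of groups, any combination-theoretic decomposition of $K$ must satisfy $\sum_{v}\chi(G_{v})-\sum_{e}\chi(G_{e})=\tfrac12\chi^{\mathrm{orb}}(E)=(1-n)/(2n)$, with $\chi(H)=1/|H|$ on the finite pieces. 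This bookkeeping will both guide and check the construction.

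For part $(2)$ I would argue constructively. Given the data of $(1a)$ or $(1b)$, one first verifies the hypotheses of the Klein--Maskit combination theorem for HNN extensions: there exist two disjoint disks, each precisely invariant in $G_{0}$ under the prescribed cyclic subgroup (order $2$ generated by $B$ in case $(a)$, order $3$ generated by $A$ in case $(b)$), with $T$ carrying the exterior of one onto the interior of the other and realising the stated conjugation. The theorem then yields that $K=\langle G_{0},T\rangle$ is discrete, isomorphic to the abstract HNN extension, with fundamental set assembled from that of $G_{0}$ together with the collar of $T$. Computing the quotient shows that $\Omega/K$ is obtained from the spherical orbifold $\widehat{\mathbb C}/G_{0}$ by deleting two cone-disks about a matched pair of equal-order cone points and gluing the resulting circles through $T$: for $G_{0}=D_{n}$ this converts $S^{2}(2,2,n)$ into a torus bearing the surviving cone point of order $n$, and for $G_{0}={\mathcal A}_{4}$ it converts $S^{2}(2,3,3)$ into a torus bearing a single cone point of order $2$; in both cases $\Omega/K=E$ has signature $(1;n)$. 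Finally $K$ is virtually free, so by Selberg's lemma it has a torsion-free subgroup of finite index, which is then free and, since $E$ has no cusps, purely loxodromic; as $\Omega$ is planar and its quotient is a closed Riemann surface, this subgroup is a Schottky group $\Gamma$. Thus $K$ is an origami-Schottky group of type $n$ and the inclusion $\Gamma<K$ induces the origami map.

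For part $(1)$ I would run the dictionary backwards. Applying the decomposition theory of geometrically finite function groups through the combination theorems, and using virtual freeness to rule out quasi-Fuchsian and degenerate factors, one presents $K$ as a reduced finite graph of groups with \emph{finite} vertex groups---necessarily finite subgroups of ${\rm PSL}_{2}({\mathbb C})$---and finite-cyclic edge groups. Since the underlying surface of $E$ is a torus, every essential simple closed curve on it is non-separating, so only HNN (handle) splittings occur; as the genus is $1$ there is exactly one, and $K$ is an HNN extension over a single finite vertex group $G_{0}$, with $\widehat{\mathbb C}/G_{0}$ a spherical $2$-orbifold. The single handle consumes exactly one matched pair of cone points of equal order $a$, and the hypothesis that $E$ has \emph{one} cone point, of order $n$, forces $\widehat{\mathbb C}/G_{0}=S^{2}(a,a,n)$ to be a triangle orbifold with precisely three cone points. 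Sphericity requires $2/a+1/n>1$, whence either $a=2$ with $n\ge 2$ arbitrary, or $a=3$ with $n=2$; these are exactly $S^{2}(2,2,n)$ and $S^{2}(2,3,3)$, with vertex groups the dihedral group $D_{n}$ and the tetrahedral group ${\mathcal A}_{4}$. Conjugating $G_{0}$ by a M\"obius transformation to the stated normal forms and taking $T$ to be the element that effects the gluing gives $TBT^{-1}=AB$ in case $(1a)$ and $TAT^{-1}=A$ in case $(1b)$.

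The step I expect to be the main obstacle is the passage, in part $(1)$, from ``$K$ is a virtually free geometrically finite function group'' to a \emph{reduced} graph-of-groups decomposition with finite vertex groups that is moreover the one carried by the Schottky structure: this requires invoking (or adapting) Maskit's structure theorem for function groups, excluding accidental parabolics and degenerate ends, and checking that the splitting along the unique handle is the Schottky handle rather than some other essential splitting. The subsequent enumeration of spherical triangle groups is elementary, but the orbifold bookkeeping that pins down ``exactly two equal cone points consumed, one of order $n$ surviving'' must be done with care. The complementary technical point in part $(2)$ is the verification of the precise-invariance hypotheses, without which the combination theorem does not apply and the freeness and planarity needed to recognise $\Gamma$ as Schottky are not guaranteed.
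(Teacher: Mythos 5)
Your target picture agrees with the paper's: one finite vertex group $G_{0}$, one HNN edge over a cyclic group, and the enumeration of admissible $G_{0}$ via the spherical triangle orbifolds $S^{2}(2,2,n)$ and $S^{2}(2,3,3)$ is essentially the same computation as the paper's Lemma \ref{lema7} (which rules out cyclic, ${\mathcal A}_{5}$ and ${\mathfrak S}_{4}$ stabilizers); your Euler-characteristic bookkeeping is a correct consistency check. Your part (2) is also viable but takes a different route: the paper does not invoke Selberg's lemma, it writes down explicit finite-index Schottky subgroups (e.g.\ $\Gamma=\langle T, ATA^{-1},\ldots\rangle$ of rank $n$ in case (a) and rank $4$ in case (b)), which it needs anyway to produce the Hurwitz translation surface examples; your appeal to ``torsion-free of finite index in $K$ implies Schottky'' is legitimate and is in fact stated in the paper's preliminaries as a consequence of Maskit's characterization, but you would still need to verify the precise-invariance hypotheses of the combination theorem, which you only flag.

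The genuine gap is the step you yourself identify in part (1): the passage from ``$K$ is a virtually free geometrically finite function group with $\Omega/K$ a torus with one cone point'' to ``$K$ is an HNN extension of a \emph{finite} group over a cyclic group.'' Your justification --- every essential simple closed curve on a torus is non-separating, hence there is exactly one handle splitting --- conflates curves on $E$ with the decomposition of $K$. Cutting $E$ along a single non-separating loop avoiding the cone point leaves an annulus with one cone point, whose orbifold fundamental group is $\mathbb{Z}*\mathbb{Z}_{n}$, so the vertex group of the induced splitting has no reason to be finite; and Maskit's structure theorem by itself allows infinite elementary factors (loxodromic cyclic groups and their finite extensions), which virtual freeness does not exclude. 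The paper closes exactly this gap with Theorem \ref{lifting}: since $G=K/\Gamma$ lifts to $K$ under the Schottky covering $P:\Omega\to S$, there is a $G$-invariant multicurve ${\mathcal F}$ on $S$ defining $P$; its lift to $\Omega$ gives structure loops and structure regions, each region mapping homeomorphically into the \emph{compact} surface $S$, which is what forces the region stabilizer $K_{R}$ to be finite. Lemmas \ref{lema1}--\ref{lema5} then show that all structure loops are $K$-equivalent with a common nontrivial cyclic stabilizer preserving each adjacent region (this is also what guarantees the conjugating element $T$ is loxodromic rather than an elliptic swapping the two sides), and that all loops of ${\mathcal F}$ project to a single loop $\alpha\subset E$, so that the quotient graph of groups really has one vertex and one edge. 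Without some substitute for this structure-loop machinery --- an equivariant loop theorem, or a careful reduction of Maskit's decomposition adapted to the Schottky handle structure --- your part (1) remains a plausible outline rather than a proof.
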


\begin{rema}\label{obs3}
The deck group of a regular origami pair cannot always be realized in terms of origami-Schottky groups. For instance, Example 7(2)  in \cite{SW} is a Hurwitz translation surface which is not of Schottky type as there is no surjective homomorphism from $K$ (as in Theorem \ref{const0} for $n=2$) to the dihedral group $D_{4}$ with a torsion-free kernel.
\end{rema}

\section{Preliminaries}
\subsection{Kleinian and Schottky groups}
A {\it Kleinian group} is a discrete subgroup $K$ of the group of M\"obius transformations ${\mathbb M}={\rm PSL}_{2}({\mathbb C})$ and its region of discontinuity is the open subset $\Omega$ of $\widehat{\mathbb C}$ (it might be empty) of points on which $K$ acts discontinuously.  If $\Gamma$ is a finite index subgroup of $K$, then both have the same region of discontinuity (see \cite{Maskit:book} for this and more generalities on Kleinian groups).  A {\it function group} is a pair $(K,\Delta)$, where $K$ is a finitely generated Kleinian group and $\Delta \subset \Omega$ is a $K$-invariant component. 

A {\it Schottky group of rank $g \geq 1$} is a Kleinian group $\Gamma<{\mathbb M}$
generated by loxodromic transformations $A_1,\ldots,A_g$, where there are $2g$ disjoint
simple loops, $\alpha_1, \alpha'_1,\ldots,\alpha_g,\alpha'_g$, with a common outside $D$ in the
extended complex plane $\widehat{\mathbb C}$, where $A_i(\alpha_i)=\alpha'_i$, and $A_i(D)\cap
D=\emptyset$, $i=1,\ldots,g$.  It is well known that (i) $\Omega$ is connected (so $(\Gamma,\Omega)$ is a function group) and dense in $\widehat{\mathbb C}$,
and that (ii) $S=\Omega/\Gamma$ is a closed Riemann surface of genus $g$. We say that $S$ is {\it uniformized} by $\Gamma$.
 As a consequence of the retrosection theorem, see for instance \cite{Bers, Koebe}, every closed Riemann surface can be uniformized by a Schottky group.  
It is well known that a Schottky group of rank $g$ is isomorphic to the free group of rank $g$ and it is purely loxodromic (moreover,  these properties characterize these groups within the class of Kleinian groups with non-empty region of discontinuity \cite{Maskit:Schottky groups}). 

\subsection{Maskit's description of function groups}
Maskit's decomposition theorem \cite{Maskit:construction, Maskit:function2, Maskit:function3, Maskit:function4} states that every function group can be constructed from elementary groups (Kleinian groups with finite limit set), quasifuchsian groups  (function groups with limit set being a simple loop) and degenerate groups (function groups whose region of discontinuity is connected and simply-connected) by a finite number of applications of the Klein-Maskit combination theorems \cite{Maskit:Comb, Maskit:Comb4, Maskit:book}. Moreover, in the construction, the amalgamated free products and the HNN-extensions are produced along cyclic groups. As a consequence of such a decomposition result, a Schottky group can be defined as a purely loxodromic geometrically finite function group with a totally disconnected limit set. The groups $K$ as in Theorem \ref{const0} are examples of geometrically finite function groups with totally disconnected limit sets. So, if $\Gamma$ is any finite index subgroup of $K$, which is torsion-free, then it is a Schottky group.

\subsection{Equivariant loops}
To prove our main result, we will need the following result which provides the existence of an invariant collection of loops on a surface. This result, in the case of compact $3$-manifolds, is a consequence of the Equivariant Loop Theorem \cite{M-Y}, whose proof is based on minimal surfaces, that is, surfaces that locally minimize the area. 

\begin{theo}\cite{Hidalgo:Auto}\label{lifting}
Let $(\Gamma,\Delta)$ be a torsion-free function group uniformizing a closed Riemann surface $S$ of genus $g \geq 2$, that is, there is a regular covering $P:\Delta \to S$ with $\Gamma$ as covering group. If $G$ is a group of automorphism of $S$,  then it lifts with respect to $P$ if and only if  there is a collection $\mathcal F$ of pairwise disjoint simple loops on $S$ such that:
\begin{itemize}[leftmargin=*,align=left]
\item[(i)] $\mathcal F$ defines the regular planar covering $P:\Delta \to S$; and
\item[(ii)] $\mathcal F$ is invariant under the action of $G$.
\end{itemize}
\end{theo}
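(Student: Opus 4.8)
The plan is to prove the two implications separately, after first recasting the data algebraically. Since $S$ has genus $g \geq 2$ its full automorphism group is finite, so the group $G$ is finite, which is exactly what makes the equivariant machinery applicable. The regular planar covering $P:\Delta \to S$ corresponds to the normal subgroup $N = P_{*}\pi_{1}(\Delta) \triangleleft \pi_{1}(S)$, so that $\Gamma \cong \pi_{1}(S)/N$ via the surjection $\theta:\pi_{1}(S)\to\Gamma$ induced by $P$. By the classical theory of planar coverings, to say that a collection $\mathcal F$ of pairwise disjoint simple loops \emph{defines} $P$ is precisely to say that the homotopy classes of its members normally generate $N$. With this dictionary, invariance of $\mathcal F$ (condition (ii)) becomes the statement that $\phi_{*}(N)=N$ for every $\phi\in G$.

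The implication from (i) and (ii) to the liftability of $G$ is the routine direction. If a $G$-invariant $\mathcal F$ defining $P$ exists, then for each $\phi\in G$ the induced automorphism $\phi_{*}$ of $\pi_{1}(S)$ carries the normal closure of $[\mathcal F]$ to itself, whence $\phi_{*}(N)=N$. The standard lifting criterion for the regular covering associated with $N$ then guarantees that each $\phi$ admits a lift to $\Delta$, and the totality of these lifts assembles into a group $K$ of automorphisms of $\Delta$ with $\Gamma\triangleleft K$ and $K/\Gamma\cong G$; this is exactly the assertion that $G$ lifts with respect to $P$.

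The substance of the theorem is the converse, and this is where I expect the real difficulty to lie. Here I would pass to the three-dimensional picture: the function group $\Gamma$ acts on $\mathbb{H}^{3}\cup\Delta$ with quotient a compact $3$-manifold $M=(\mathbb{H}^{3}\cup\Delta)/\Gamma$ (compactness coming from geometric finiteness), carrying $S$ as a boundary component, with $\pi_{1}(M)\cong\Gamma$ and the inclusion $S\hookrightarrow M$ inducing $\theta$. Planarity of $\Delta$ forces $N$ to be normally generated by meridians, i.e. by the boundaries of a complete system of compressing disks for $M$ along $S$. When $G$ lifts, the extension $K$ acts on $\mathbb{H}^{3}\cup\Delta$ and descends to a (generally non-free) action of the finite group $G=K/\Gamma$ on $M$. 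At this point I would invoke the Equivariant Loop Theorem \cite{M-Y}, whose minimal-surface proof produces embedded compressing disks that can be chosen invariant under a finite group action: applying it equivariantly, and iterating so as to exhaust the compression, should yield a $G$-invariant complete disk system. Its boundary curves are the desired $\mathcal F$: by construction they are pairwise disjoint simple loops invariant under $G$, and being a complete meridian system they normally generate $N$, hence define $P$.

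The main obstacle is this last step. The basic Equivariant Loop Theorem furnishes a single invariant compressing disk, or one orbit of disks, whereas what is needed is a \emph{complete} invariant system whose boundaries recover the entire kernel $N$, rather than a proper $G$-invariant subgroup. Controlling the equivariant iteration so that it terminates with the full meridian system, while keeping the disks disjoint and embedded throughout, is the delicate point, and it is precisely here that the minimal-surface techniques behind \cite{M-Y} are required. A secondary technical matter is the careful passage between the analytic category (function groups and conformal automorphisms) and the topological category (compact $3$-manifolds and compressing disks), for which geometric finiteness of $\Gamma$ is the essential hypothesis.
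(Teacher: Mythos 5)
First, a point of comparison: the paper does not prove Theorem \ref{lifting} at all --- it is imported from \cite{Hidalgo:Auto}, and the surrounding text only remarks that \emph{in the case of compact $3$-manifolds} it is a consequence of the Equivariant Loop Theorem of \cite{M-Y}. Your easy direction (a $G$-invariant defining system $\mathcal F$ gives $\phi_{*}(N)=N$ for all $\phi\in G$, hence liftability) is correct and standard; the identification of ``$\mathcal F$ defines $P$'' with ``$N$ is the normal closure of the classes of $\mathcal F$'' is Maskit's planarity theorem, and since the deck group is torsion free the exponents occurring there are all equal to $1$, so no issue arises.

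The gap is in the converse. You assert that $M=(\mathbb{H}^{3}\cup\Delta)/\Gamma$ is compact, ``compactness coming from geometric finiteness'' --- but geometric finiteness is not a hypothesis of Theorem \ref{lifting}. A torsion-free function group uniformizing a closed surface need not be geometrically finite (degenerate factors are explicitly allowed in the Maskit decomposition quoted in Section 2 of the paper), and even a geometrically finite function group may contain parabolic elements, in which case the Kleinian manifold has cusps and is not compact. In either situation the Meeks--Yau theorem cannot be invoked in the form you use it. This is precisely why the paper qualifies its remark with ``in the case of compact $3$-manifolds'': your argument establishes the special case in which $(\mathbb{H}^{3}\cup\Delta)/\Gamma$ is compact --- which happens to be the only case needed later in the paper, since there $\Gamma$ is a Schottky group and $M$ is a handlebody --- but it does not prove the theorem in the generality stated. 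The proof in \cite{Hidalgo:Auto} is a different, two-dimensional argument designed to cover general function groups without passing through minimal surfaces. (Your secondary worry, about promoting a single equivariant compressing disk to a complete invariant disk system whose boundaries normally generate all of $N$, is the part that \emph{is} handled by the standard equivariant hierarchy in the Meeks--Yau framework; the real obstruction is the compactness assumption.)
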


\begin{rema}\label{observa1}
(1) In the case that $\Gamma$ is a Schottky group (so $\Delta$ is all of its region of discontinuity), $S \setminus {\mathcal F}$ consists of planar surfaces. (2) By the uniformization theorem, the Riemann surface $S$ has a natural hyperbolic structure, induced by the hyperbolic structure of the hyperbolic plane,  so that $G$ acts as a group of isometries. As every homotopically non-trivial simple loop on $S$ is homotopic to a unique simple closed geodesic, the collection ${\mathcal F}$ can be assumed to be formed by simple closed geodesics. 
\end{rema}

\section{Proof of Theorem \ref{const0}}\label{inter}
\subsection{Proof of Part (1)}
Let $K$ be an origami-Schottky group of type $n \geq 2$ with region of discontinuity $\Omega$. By the definition, $E=\Omega/K$ is an orbifold of genus one with exactly one cone point, of order $n$, and
there is a Schottky group $\Gamma$ which is a finite index subgroup of $K$. We may assume $\Gamma$ to be a normal subgroup; in which case we set $G=K/\Gamma$. The finite index property asserts that $\Omega$ is also the region of discontinuity of $\Gamma$. 
Let us consider a regular branched covering map $\pi:\Omega \to E$ (with deck group $K$), a regular covering $P:\Omega \to S=\Omega/\Gamma$ (with deck group $\Gamma$), and a regular branched cover $Q:S \to E$ (with deck group $G$) so that $\pi=Q \circ P$ (note that $G$ cannot be an abelian group as $Q$ has exactly one branch value of positive order).
There is a surjective homomorphism $\Phi:K\to G$, with kernel $\Gamma$, satisfying that $P \circ k = \Phi(k) \circ P$, for every $k \in K$.

As $\Gamma$ has no parabolic elements and $K$ is a finite extension of $\Gamma$, neither $K$ has parabolic elements (i.e.,  every element of $K$, different from the identity, is either loxodromic or elliptic). As $\Gamma$ is geometrically finite and of finite index, $K$ is also a geometrically finite function group.  As a consequence of the results in \cite{Hidalgo:MEFP} we have the following fact.

\begin{lemm} 
Let $k \in K$ be an elliptic transformation different from the identity. Then 
either (i) both of its fixed points belong to $\Omega$ or (ii) there is a loxodromic transformation in $K$ commuting with it.
\end{lemm}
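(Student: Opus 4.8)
The plan is to argue according to the position of the two fixed points $p_{1},p_{2}$ of $k$ relative to $\Omega$ and its complement $\Lambda=\widehat{\mathbb C}\smallsetminus\Omega$ (the common limit set of $K$ and $\Gamma$). If both $p_{1}$ and $p_{2}$ lie in $\Omega$ we are already in case (i), so I would assume that at least one of them, say $p_{1}$, lies in $\Lambda$, and aim to produce a loxodromic element of $K$ fixing the pair $\{p_{1},p_{2}\}$. This suffices, because after conjugating so that $p_{1}=\infty$ and $p_{2}=0$ the transformation $k$ becomes $z\mapsto \zeta z$ with $|\zeta|=1$, while any loxodromic transformation with the same two fixed points becomes $z\mapsto\lambda z$; these commute, giving case (ii). Throughout I would use the facts already established above: $k$ generates a finite cyclic group $\langle k\rangle$, $\langle k\rangle\cap\Gamma=\{\mathrm{id}\}$ since $\Gamma$ is torsion free, and $K$ is a geometrically finite function group with no parabolic elements.

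The core of the argument concerns the stabilizer $K_{p_{1}}=\{g\in K:\ g(p_{1})=p_{1}\}$, which contains $\langle k\rangle$. I claim that once $K_{p_{1}}$ is known to be infinite the conclusion follows quickly. Indeed, $K_{p_{1}}$ is a discrete subgroup of $K$, hence itself parabolic free; since a discrete group all of whose non-identity elements are elliptic is finite, the infinite group $K_{p_{1}}$ must contain a loxodromic transformation $L$ fixing $p_{1}$. A discrete group with a common fixed point is elementary, and with a loxodromic present its limit set is exactly $\{p_{1},q\}$, where $q$ is the second fixed point of $L$; since this two-point set is invariant and every element of $K_{p_{1}}$ fixes $p_{1}$, every element of $K_{p_{1}}$ fixes $q$ as well (an injective M\"obius map fixing $p_{1}$ cannot send $q$ to $p_{1}$). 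In particular $k(q)=q$, which forces $q\in\{p_{1},p_{2}\}$ and hence $q=p_{2}$. Thus $L$ fixes both $p_{1}$ and $p_{2}$ and commutes with $k$, as desired.

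It therefore remains to prove the key step: if $p_{1}\in\Lambda$ is fixed by a nontrivial elliptic element, then $K_{p_{1}}$ is infinite. This is the point I expect to be the main obstacle, since a bare conical limit point may well have trivial stabilizer; what rescues us is the extra input that $p_{1}$ is fixed by $k\neq\mathrm{id}$, together with the absence of parabolics and geometric finiteness. Here I would appeal to the structural description of elliptic elements of geometrically finite function groups without parabolics obtained in \cite{Hidalgo:MEFP}: such an elliptic element either has both fixed points in $\Omega$, or is contained in a maximal elementary subgroup whose limit set consists of exactly two points. In the second alternative that elementary subgroup is an infinite loxodromic-type elementary group contained in $K_{p_{1}}$, which gives the required infinitude and, via the previous paragraph, the commuting loxodromic. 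The only genuine work is thus to exclude an isolated elliptic fixed point sitting in $\Lambda$ with finite stabilizer, exactly the configuration ruled out by the hypotheses, and this is where the cited results carry the real weight.
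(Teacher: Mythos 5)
Your proposal is correct and takes essentially the same route as the paper, which offers no proof of this lemma beyond the citation: it is stated as ``a consequence of the results in \cite{Hidalgo:MEFP}.'' Your supplementary reductions --- that a commuting loxodromic follows once $K_{p_{1}}$ is infinite, using that a purely elliptic discrete group is finite, that a discrete group with a common fixed point is elementary, and that the second limit point must then be $p_{2}$ --- are all sound, but as you yourself observe the decisive step (ruling out an elliptic fixed point in $\Lambda$ with finite stabilizer, for a geometrically finite, parabolic-free function group) is exactly the content of the cited result, so the weight of the argument rests in the same place as in the paper.
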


As the group $G$ lifts under $P$, and such lifting is the group $K$, Theorem \ref{lifting} asserts the existence of a collection of pairwise disjoint simple loops $\mathcal F$ in $S$, invariant under $G$, such that (i) it divides $S$ into planar surfaces (as $\Gamma$ is a Schottky group) and (ii) each loop lifts to a simple loop on $\Omega$.  As observed in (2) of Remark \ref{observa1}, one may assume each of the loops in ${\mathcal F}$ to be a geodesic.
We will assume that the choice of ${\mathcal F}$ is minimal in the sense that if we delete some of the loops of ${\mathcal F}$, then either (a) we lose the $G$-invariance property or (b) one of the complements of these new set of loops is no longer planar.
Let  ${\mathcal G} \subset \Omega$ be the collection of loops, called the {\it structural loops},  obtained by the lifting of those in $\mathcal F$ under the covering $P$. The connected components of $\Omega\setminus{\mathcal G}$ are called the {\it structure regions}. By the planarity property of $\Omega$, each structure loop is a common boundary of exactly two structure regions.
As $P:\Omega \to S$ is a regular covering, whose deck group $\Gamma$ has no torsion, we may observe the two following facts:

\begin{enumerate}[leftmargin=*,align=left]
\item If $\delta \in {\mathcal G}$ and $\gamma \in {\mathcal F}$ so that $P(\delta)=\gamma$, then $P:\delta \to \gamma$ is a homeomorphism.
\item If $R$ be a structure region, then  $P:R \to P(R)$ is a homeomorphism.
\end{enumerate}

\begin{lemm}\label{lema1}
If $\delta \in {\mathcal G}$, then its $K$-stabilizer is either trivial or a cyclic group generated by an elliptic transformation keeping invariant each of the two structure regions sharing $\delta$ in the boundary.
\end{lemm}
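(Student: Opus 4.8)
The plan is to force $K_{\delta}$ to be a finite, purely elliptic group and then to analyse its action on the two sides of the Jordan curve $\delta$. I would begin by observing that the $\Gamma$-stabiliser of $\delta$ is trivial: since $P|_{\delta}\colon \delta\to\gamma$ is a homeomorphism onto $\gamma:=P(\delta)\in\mathcal{F}$, a nontrivial $\gamma_{0}\in\Gamma$ fixing $\delta$ would permute the points of $\delta$ lying over a common point of $\gamma$, contradicting injectivity of $P|_{\delta}$; hence $\gamma_{0}=\mathrm{id}$. As $\Gamma=\ker\Phi$, the map $\Phi$ is injective on $K_{\delta}$, so $K_{\delta}\cong\Phi(K_{\delta})\leq G$ is finite and every non-identity element of $K_{\delta}$ is elliptic.

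Next I would dispose of the side-preserving part. By planarity of $\Omega$, the loop $\delta$ bounds two Jordan domains $D_{1},D_{2}$ in $\widehat{\mathbb{C}}$, with the adjacent structure regions $R_{i}\subset D_{i}$. Let $K_{\delta}^{+}\leq K_{\delta}$ be the subgroup fixing each $D_{i}$; it has index at most $2$. Each $D_{i}$ is a hyperbolic simply connected domain, so a Riemann map conjugates the conformal $K_{\delta}^{+}$-action on $D_{i}$ to a finite subgroup of $\mathrm{PSL}_{2}(\mathbb{R})$, which is necessarily cyclic. Thus $K_{\delta}^{+}=\langle k_{0}\rangle$ is cyclic, generated by an elliptic $k_{0}$ with one fixed point in each $D_{i}$ (so off $\delta$); since $k_{0}$ preserves $\delta$ and each $D_{i}$, it fixes the unique structure region of $D_{i}$ meeting $\delta$, i.e. $k_{0}(R_{i})=R_{i}$. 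Consequently, once I establish $K_{\delta}=K_{\delta}^{+}$, the stabiliser is cyclic and its generator keeps each structure region invariant, exactly as claimed.

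The heart of the argument is therefore to exclude \emph{side-swapping} elements. Suppose $k'\in K_{\delta}\setminus K_{\delta}^{+}$ interchanges $D_{1}$ and $D_{2}$. Then $k'$ reverses the orientation of $\delta$, so $k'|_{\delta}$ is a reflection; hence $(k')^{2}$ fixes $\delta$ pointwise, is thus the identity, and $k'$ is an involution with both fixed points $q_{1},q_{2}$ on $\delta\subset\Omega$. By the single-cone-point hypothesis each $q_{i}$ projects to the unique cone point, so $K_{q_{i}}$ is cyclic of order $n$; and by disjointness of $\mathcal{G}$ the loop $\delta$ is the only structural loop through $q_{i}$, whence $K_{q_{i}}(\delta)=\delta$, i.e. $K_{q_{i}}\leq K_{\delta}$. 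As $\delta$ is a smooth (geodesic) arc through $q_{i}$, every element of $K_{q_{i}}$ preserves the tangent line of $\delta$ at $q_{i}$; acting near $q_{i}$ as rotations by multiples of $2\pi/n$, their angles are forced into $\{0,\pi\}$, so $n\leq 2$. This already yields $K_{\delta}=K_{\delta}^{+}$, and hence the lemma, for every $n\geq 3$.

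The main obstacle is the residual case $n=2$, where the tangent argument no longer forbids an involution $k'$ with fixed points on $\delta$. Here I would invoke the minimality of $\mathcal{F}$: the fixed points of the involutions of $G$ all lie over the single cone point of $E=\Omega/K$, and a side-swapping $k'\in K_{\delta}$ forces the geodesic $\gamma=P(\delta)$ to pass through such a point and to be reversed by $\Phi(k')$. The plan is to show, using the genus-one, one-cone-point geometry of $E$, that such a loop is incompatible with a minimal $G$-invariant geodesic family defining the Schottky covering—either it becomes redundant for planarity, or it can be exchanged within its handle for a side-preserving representative—so that for a minimal $\mathcal{F}$ no structural loop carries a side-swapping involution. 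Making this last reduction precise is the delicate step; once it is in place one has $K_{\delta}=K_{\delta}^{+}$, and Lemma \ref{lema1} follows.
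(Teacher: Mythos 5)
Your reduction of $K_{\delta}$ to a finite group and your analysis of the side-preserving subgroup $K_{\delta}^{+}$ are both sound (the Riemann-map argument that $K_{\delta}^{+}$ is cyclic with fixed points off $\delta$ is a clean alternative to the paper's appeal to the classification of finite M\"obius groups preserving a loop), and your tangent-direction argument correctly rules out side-swapping elements when $n\geq 3$: a generator of the point stabilizer $K_{q_{i}}$ rotates by $2\pi/n$ at $q_{i}$ and must preserve the tangent line of $\delta$ there, forcing $n\leq 2$. The genuine gap is the case $n=2$, and it cannot be dismissed as residual: it is precisely the case in which the extra possibility $K_{R}\cong{\mathcal A}_{4}$ of Theorem \ref{const0}(1)(b) arises, so the lemma is needed there at full strength. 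What you offer for $n=2$ is only a plan --- modify a minimal $G$-invariant geodesic family ${\mathcal F}$ so that no loop is reversed by an involution of $G$ --- and nothing in the minimality of ${\mathcal F}$ obviously prevents \emph{every} admissible family from containing such a loop; you supply no mechanism for the claimed ``exchange within its handle,'' and the intended contradiction is never derived.

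The paper closes this case without touching ${\mathcal F}$ at all, by a global cone-point count that works uniformly for all $n\geq 2$: a side-swapping $k'\in K_{\delta}$ is, as you showed, an involution with both fixed points $q_{1},q_{2}$ on $\delta\subset\Omega$, and these project to \emph{two distinct} cone points of $E$ of order divisible by $2$, contradicting that $E$ has exactly one cone point. The distinctness is the observation your proposal is missing: since the loops of ${\mathcal G}$ are pairwise disjoint, any $g\in K$ with $g(\delta)\cap\delta\neq\emptyset$ already lies in $K_{\delta}$, so $\pi|_{\delta}$ factors injectively through $\delta/K_{\delta}$; the side-preserving part of $K_{\delta}$ acts freely on $\delta$, so $\delta/K_{\delta}$ is an arc whose two endpoints (the images of $q_{1}$ and $q_{2}$) are distinct points of $E$. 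Replacing your $n=2$ plan by this argument yields a complete proof; as written, the proposal does not establish the lemma for $n=2$.
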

\begin{proof}
Let $\delta \in {\mathcal G}$  and $P(\delta)=\gamma \in {\mathcal F}$. As $P:\delta \to \gamma$ is a homeomorphism, if $K_{\delta}$ denotes the $K$-stabilizer of $\delta$ and $G_{\gamma}$ denotes the $G$-stabilizer of $\gamma$, then $\Phi:K_{\delta} \to G_{\gamma}$ is an isomorphism. In particular, $K_{\delta}$ is a finite group of M\"obius transformations. The non-trivial finite groups of M\"obius transformations are cyclic groups, dihedral groups, the alternating groups ${\mathcal A}_{4}$ and ${\mathcal A}_{5}$ and the symmetric group ${\mathfrak S}_{4}$. Since neither the alternating groups nor the symmetric group can leave invariant a simple loop on the Riemann sphere, we obtain that the only possibilities for $K_{\delta}$ are, apart from the trivial group, a cyclic or dihedral group.
Let $R_{1}$ and $R_{2}$ the two structures regions sharing $\delta$ in their borders.
If $k \in K_{\delta}$ is different from the identity, then the only possibilities are for $k$ to be either (i) an elliptic element of order two permuting $R_{1}$ with $R_{2}$ or (ii) an elliptic element keeping invariant each $R_{j}$. 
In case (i), the group $K_{\delta}$ is either a cyclic group of order two (both of its fixed points belonging to $\delta$) or a dihedral group generated by an elliptic element of order two with both fixed points in $\delta$ and an elliptic element keeping invariant each $R_{j}$. This will assert that $E$ must have at least two cone points of order two, a contradiction.
As a consequence, the stabilizer group $K_{\delta}$ is either trivial or a cyclic group generated by an elliptic element keeping invariant each of structure regions $R_{1}$ and $R_{2}$.
\end{proof}

As a consequence of Lemma \ref{lema1}, for each $\gamma \in {\mathcal F}$ and each $\delta \in {\mathcal G}$ so that $P(\delta)=\gamma$, one has that $\pi(\delta)=Q(\gamma)$ is a power of a simple loop which does not contains the cone point of $E=\Omega/K$.

\begin{lemm}\label{lema3}
(1) All the loops in ${\mathcal F}$ are $G$-equivalent. Equivalently, all structure loops are $K$-equivalent. (2)
 All structure regions are $K$-equivalent.
\end{lemm}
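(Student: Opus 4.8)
\subsection*{Proof proposal}

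The plan is to encode the decomposition of $\Omega$ by the structural loops as a graph on which $K$ acts, and to read both assertions off the quotient graph. Since $\Omega$ is planar, every structural loop separates it, so the dual graph $\mathcal T$ whose vertices are the structure regions and whose edges are the structural loops (each joining the two regions it bounds) is a \emph{tree}, and $K$ acts on $\mathcal T$ by permuting regions and loops. Passing to the quotient, $\mathcal D:=\mathcal T/K$ is a finite connected graph whose vertices are the $K$-classes of structure regions, i.e. the components of $E\smallsetminus\pi(\mathcal G)$, and whose edges are the $K$-classes of structural loops, i.e. the loops of $\pi(\mathcal G)=Q(\mathcal F)$ on $E$. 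Thus part (2) is the assertion that $\mathcal D$ has a single vertex, and part (1) that it has a single edge; I will in fact show that $\mathcal D$ is a single vertex carrying one loop-edge, which is exactly the combinatorial datum of an HNN-extension.

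Next I would pin down the numerics of $\mathcal D$. By the consequence of Lemma \ref{lema1}, each $Q(\gamma)$ is the support of a simple loop missing the cone point, and loops in different $G$-orbits stay disjoint on $E$ because all of $\mathcal F$ is pairwise disjoint and $G$-invariant; hence $Q(\mathcal F)$ is a disjoint union of simple loops, one per edge of $\mathcal D$. Each region of $E\smallsetminus Q(\mathcal F)$ is a quotient $F/\mathrm{Stab}_G(F)$ of a genus-zero piece $F$ of $S\smallsetminus\mathcal F$ by a finite group, hence has underlying genus zero. Cutting the genus-one surface underlying $E$ along $Q(\mathcal F)$ into genus-zero pieces, the standard Euler-characteristic count yields $b_1(\mathcal D)=\mathrm{genus}(E)=1$; as $\mathcal D$ is connected this means its number of edges equals its number of vertices, so $\mathcal D$ is unicyclic.

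The heart of the argument is a removability criterion fed by the minimality of $\mathcal F$. Deleting a whole $G$-orbit of loops preserves $G$-invariance, so by minimality it must destroy the planarity of some piece; I would show conversely that certain edges are always removable. If a region $u$ is a disc (degree $1$ in $\mathcal D$) or a cone-point-free annulus (degree $2$ with two distinct incident edges), then every piece $F$ of $S$ lying over $u$ is itself a disc or an annulus and meets the orbit of any one incident loop in exactly one boundary circle; consequently the structural loops over that edge form a forest in the dual graph of $S$, and erasing them merges genus-zero pieces into genus-zero pieces, leaving the planar covering planar. This contradicts minimality, so $\mathcal D$ has \emph{no} disc regions (in particular no leaves: every vertex has degree $\ge 2$) and \emph{no} cone-point-free annulus regions (every cone-point-free vertex has degree $\ge 3$). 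Since $E$ has exactly one cone point, exactly one vertex carries it; the handshake identity $\sum_v\deg v=2\,\#\mathrm{edges}=2\,\#\mathrm{vertices}$ then forces $3(\#\mathrm{vertices}-1)+2\le 2\,\#\mathrm{vertices}$, i.e. a single vertex. With $b_1(\mathcal D)=1$ this single vertex carries exactly one loop-edge, giving both parts simultaneously.

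I expect the removability criterion of the third paragraph to be the main obstacle: it is precisely here that the \emph{Schottky}, torsion-free nature of $\Gamma$ must intervene, to guarantee that deleting the chosen $G$-orbit creates no new genus on $S$. In tree language this is the assertion that no loxodromic element of $\Gamma$ has its translation axis supported on a single $K$-orbit of edges of $\mathcal T$, so that the loops over such an edge descend to a forest on $S$. The genus bookkeeping that ties the branched cover $Q$ to the planar cover $P$ --- via the structure regions being discs or annuli over low-degree vertices, as controlled by Lemma \ref{lema1} --- is the technical core, whereas the final vertex count is purely combinatorial.
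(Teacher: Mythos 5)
Your argument is correct in outline but follows a genuinely different route from the paper. The paper's proof is much shorter and uses neither the dual graph nor the minimality of $\mathcal F$: for part (1) it observes that if the loops of $\mathcal F$ fell into more than one $G$-orbit, then two distinct curves of $Q(\mathcal F)$ would cobound a cylinder ${\mathcal U}\subset E$ free of the cone point (every curve of $Q(\mathcal F)$ is essential in the underlying torus, since a curve bounding a disc with at most one cone point would force a loop of $\mathcal F$ to bound a disc in $S$), and a component of $Q^{-1}({\mathcal U})$ is then an annulus in $S$ cobounded by two distinct loops of $\mathcal F$ --- impossible because these loops are distinct simple closed geodesics and hence not freely homotopic. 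Part (2) then follows at once because $E$ cut along the single curve $\alpha$ is connected. Your graph-of-groups argument trades the geodesic hypothesis for the minimality hypothesis and buys more in return: it delivers directly the ``one vertex, one loop-edge'' picture, i.e.\ the HNN datum that the paper only assembles later in the proof of Theorem \ref{const0}.

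Two loose ends in your version should be repaired. First, the handshake inequality $3(\#\mathrm{vertices}-1)+2\le 2\,\#\mathrm{vertices}$ presumes every cone-point-free vertex has degree at least $3$, but your removability criterion only excludes degree $2$ with \emph{two distinct} incident edges; a degree-$2$ vertex whose unique incident edge is a loop is not covered. (It is easily dispatched: deleting that loop-edge, which is the unique cycle of the unicyclic graph $\mathcal D$, would isolate the vertex, contradicting connectivity unless $\mathcal D$ has a single vertex --- and then that vertex must carry the cone point anyway.) Second, the general assertion you flag as the technical core --- that no loxodromic element of $\Gamma$ has its axis supported on a single $K$-orbit of edges of $\mathcal T$ --- is false as stated: in the final answer there is only one orbit of edges, so every axis is supported on it. What you actually need, and what your concrete argument already establishes, is the forest property only for an orbit incident to a disc region or to a bivalent cone-point-free region with two distinct incident edges; there it follows from the observation that every edge of that orbit has an endpoint of degree one in the corresponding subgraph of the dual graph of $S$. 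With these two points fixed your proof is complete, but it is considerably longer than the paper's.
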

\begin{proof}
(1) In fact, if this is not the case, then there exist two different loops $\gamma_{1}, \gamma_{2} \in {\mathcal F}$ so that $Q(\gamma_{1})$ and $Q(\gamma_{2})$ bounds a cylinder ${\mathcal U} \subset E$. As these two are homotopic in $E$ minus the cone point, $G_{\gamma_{1}}$ and $G_{\gamma_{2}}$ are both trivial or both isomorphic to a cyclic group of same order $m$. It follows that any component of $Q^{-1}({\mathcal U})$ must be a cylinder, a contradiction to the fact that no two elements of ${\mathcal F}$ can be homotopic (recall that these loops are simple closed geodesics).
(2) Part (1) above asserts that all loops in ${\mathcal F}$ are projected under $Q$ to the same loop $\alpha$ in $E$, and such a loop does not contain the cone point of $E$. As $E\setminus\{\alpha\}$ is connected, if $R_{1}$ and $R_{2}$ are any two structure regions, then they should be projected under $\pi$ to $E\setminus\{\alpha\}$.

\end{proof}

Let us fix one of the structure regions, say $R$, and let us denote by $\delta_{1},\ldots, \delta_{r}$ those structure loops in the boundary of $R$.
Let $K_{R}$ be the $K$-stabilizer of $R$ and by $K_{\delta_{j}}$ the $K$-stabilizer of $\delta_{j}$, which is either trivial or a cyclic group by Lemma \ref{lema1}. As all the structure loops are $K$-equivalent, by part (1) of Lemma \ref{lema3}, all $K$-stabilizers $K_{\delta_{j}}$ are conjugated in $K$. As in the above, we denote by $\alpha \subset E$ the loop obtained by projecting under $Q$ the loops if ${\mathcal F}$.

\begin{lemm}\label{lema5}
The $K$-stabilizer of any structure loop is a non-trivial cyclic group. 
\end{lemm}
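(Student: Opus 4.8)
\subsection*{Proof proposal}

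The plan is to argue by contradiction, ruling out the only alternative to the desired conclusion. By part (1) of Lemma \ref{lema3} all structure loops are $K$-equivalent, so the stabilizers $K_{\delta_{j}}$ are mutually conjugate and in particular share a common order; it therefore suffices to treat one structure loop. By Lemma \ref{lema1} the stabilizer of a structure loop is either trivial or a non-trivial cyclic group, so the whole statement reduces to showing that the trivial case cannot occur.

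The first key point I would establish is that the region stabilizer $K_{R}$ is \emph{finite}. Indeed, since $P:R\to P(R)$ is a homeomorphism, no non-trivial element of $\Gamma$ fixes $R$ setwise, so $\Gamma\cap K_{R}=\{1\}$ and $\Phi|_{K_{R}}:K_{R}\to G$ is injective; as $G=K/\Gamma$ is finite, $K_{R}$ is a finite group of M\"obius transformations isomorphic to a subgroup of $G$. Next I would identify the quotient orbifold. Every structure region projects under $\pi$ onto $E\setminus\{\alpha\}$ (part (2) of Lemma \ref{lema3}), and the restriction $\pi|_{R}$ is a regular branched covering with deck group exactly $K_{R}$, so $R/K_{R}\cong E\setminus\{\alpha\}$ as orbifolds. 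Since $E$ is a torus carrying a single cone point of order $n$ and $\alpha$ is a simple loop avoiding that cone point with $E\setminus\{\alpha\}$ connected, the loop $\alpha$ is non-separating, whence $E\setminus\{\alpha\}$ is an open annulus with exactly one cone point, of order $n$.

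With these two facts in hand I would run two independent counts on the branched covering $\bar R\to\overline{R/K_{R}}$ of degree $a:=|K_{R}|$, branched only over the cone point. Let $r$ be the number of structure loops $\delta_{1},\dots,\delta_{r}$ bounding $R$ and let $b:=|K_{\delta}|$ be the common order of the loop stabilizers. Multiplicativity of the orbifold Euler characteristic gives
$$2-r=\chi(\bar R)=a\,\chi^{\mathrm{orb}}\!\left(\overline{R/K_{R}}\right)=-\,a\,\frac{n-1}{n},$$
since an annulus with one cone point of order $n$ has orbifold Euler characteristic $-(n-1)/n$. On the other hand, each of the two boundary circles of the annulus is covered by those $\delta_{j}$ mapping onto $\alpha$, each with local degree $b=|K_{\delta_{j}}|$ (using that $\Phi:K_{\delta_{j}}\to G_{\gamma_{j}}$ is an isomorphism, from the proof of Lemma \ref{lema1}); counting preimages yields $r=2a/b$. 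Substituting $b=1$ into these two relations forces $a(n+1)=2n$, i.e. $a=2n/(n+1)$, which is not a positive integer for any $n\ge 2$ (and in any event one must have $a=|K_{R}|\ge n$). This contradiction shows $b>1$, so $K_{\delta}$ is non-trivial, and Lemma \ref{lema1} then guarantees it is cyclic.

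The main obstacle I anticipate is the careful bookkeeping behind the orbifold identification $R/K_{R}\cong E\setminus\{\alpha\}$: one must verify that the only branching of $\pi|_{R}$ occurs over the unique cone point of $E$, and in particular that the elliptic generators of the boundary stabilizers $K_{\delta_{j}}$ contribute no interior branch points in $R$ (their fixed points must lie off $\Omega$, consistent with $E$ having a single cone point), together with confirming that $\alpha$ is a genuine non-separating simple loop so that $E\setminus\{\alpha\}$ is an annulus with exactly one cone point. Once the topology of the quotient and the boundary-covering degrees are pinned down, the two Euler-characteristic and boundary counts and the ensuing arithmetic are routine.
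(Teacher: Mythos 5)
Your argument is correct and is essentially the paper's: both rule out trivial loop stabilizers via an Euler-characteristic (Riemann--Hurwitz) count for the branched cover of the cut-open torus $E\setminus\{\alpha\}$, an annulus carrying the single cone point of order $n$. The paper runs the count downstairs on a component $F=P(R)$ of $Q^{-1}(E\setminus\{\alpha\})$, gluing in discs to reduce to the impossibility of a closed surface covering the sphere branched over exactly one point, whereas you run the equivalent count upstairs on $R\to R/K_{R}$; the resulting arithmetic ($|K_{R}|=2n/(n+1)\notin{\mathbb Z}$ for $n\geq 2$) is the same.
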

\begin{proof}
We already know that the $K$-stabilizer of any structure loop is either a non-trivial cyclic group or the trivial group.
As a consequence of part (1) of Lemma \ref{lema3}, any two structure loops have conjugated $K$-stabilizers. So, if one has a trivial stabilizer, then everyone does. Assume the $K$-stabilizer of the structure loops to be trivial. It follows that the $G$-stabilizer of any of the loops in ${\mathcal F}$ is trivial. If $F$ is any of the connected components of $Q^{-1}(E\setminus\{\alpha\})$, then it is the complement of some closed discs of some closed surface $\widehat{F}$.
Let us consider the restriction $Q:F \to E\setminus\{\alpha\}$. We may glue discs to the boundary loops of the cylinder $E\setminus\{\alpha\}$ and and those of $F$ to extend continuously $Q:\widehat{F} \to \widehat{\mathbb C}$ as a covering map from a closed surface to the sphere with exactly one cone point of order $n \geq 2$. This is not possible by the Euler characteristic.
\end{proof}

As a consequence of Lemma \ref{lema5}, $K_{\delta_{j}}=\langle k_{j} \rangle$, where $k_{j} \in K$ is non-trivial elliptic element. As $k_{j} \in K_{R}$, we also have that $K_{R}$ is non-trivial.

\begin{lemm}\label{lema6}
The group $K_{R}$ cannot be cyclic.
\end{lemm}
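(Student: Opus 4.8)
The plan is to argue by contradiction, assuming $K_{R}=\langle k\rangle$ is cyclic and contradicting the hypothesis that $E$ has genus one with a single cone point, of order $n$. First I would record that $K_{R}$ is finite and generated by an elliptic element: indeed $K_{R}$ contains the non-trivial elliptic generators $k_{j}$ of the edge stabilizers $K_{\delta_{j}}$ (Lemmas \ref{lema5} and \ref{lema1}), and an infinite cyclic group is torsion free, so $K_{R}$ must be finite; since $K$ has no parabolics its generator $k$ is then elliptic, of some order $m\geq 2$. Because $P|_{R}$ is a homeomorphism onto $P(R)$, $Q$ carries $P(R)$ onto $E\setminus\{\alpha\}$, and distinct structure regions are disjoint while all are $K$-equivalent (Lemma \ref{lema3}), the map $\pi|_{\overline R}$ descends to an isomorphism of orbifolds from $\overline R/K_{R}$ onto the closed cylinder obtained from $E$ by cutting along $\alpha$, carrying the unique, interior cone point of $E$, of order $n$.

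The crux is the next step, which exploits a rigidity special to cyclic groups. All non-trivial powers of the elliptic $k$ share the same pair of fixed points $\{p_{1},p_{2}\}$, so the only points of $R$ with non-trivial $K_{R}$-stabilizer are $p_{1},p_{2}$, and at either of them the stabilizer is the whole of $K_{R}$. Since $\overline R/K_{R}$ has exactly one cone point and it has order $n$, exactly one of $p_{1},p_{2}$ lies in $R$ and $m=|K_{R}|=n$; thus $K_{R}\cong \mathbb{Z}/n\mathbb{Z}$. This is precisely the step that will fail for a dihedral vertex group, whose extra involution interchanges the two fixed points of the rotation subgroup and so merges them into a single orbit.

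Finally I would count boundaries. As $R$ is planar with $r$ boundary structural loops, $\chi(\overline R)=2-r$, while the degree-$n$ orbifold covering $\overline R\to \overline R/K_{R}$ together with the value of $\chi^{\mathrm{orb}}$ of the cylinder-with-cone-point forces $r=n+1$. Each of the two boundary circles of that cylinder is covered by a single $K_{R}$-orbit of structural loops, of sizes $n/d_{+}$ and $n/d_{-}$, where $d_{\pm}$ are the orders of the corresponding stabilizers $K_{\delta}\subseteq K_{R}$ (Lemma \ref{lema1}) and the local degree along $\delta$ equals $|K_{\delta}|$. Hence
\[
\frac{1}{d_{+}}+\frac{1}{d_{-}}=\frac{n+1}{n},\qquad d_{\pm}\mid n .
\]
For $n\geq 2$ the only integer solution is $\{d_{+},d_{-}\}=\{1,n\}$, so some structural loop has trivial $K$-stabilizer, contradicting Lemma \ref{lema5}. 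This contradiction rules out $K_{R}$ being cyclic.

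I expect the main obstacle to be making the second paragraph precise: identifying $\overline R/K_{R}$ with the cut-open cylinder together with its cone data, and deducing $m=n$ from the single-axis rigidity of the elliptic $k$. The boundary bookkeeping of the last paragraph, namely that the preimages of a connected boundary circle form one $K_{R}$-orbit with local degree $|K_{\delta}|$, is routine once the orbifold covering $\overline R\to\overline R/K_{R}$ is established.
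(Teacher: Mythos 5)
Your proof is correct, and it begins exactly as the paper's does: since $K_{R}$ would be generated by a single elliptic element, all of its non-trivial elements share one pair of fixed points, exactly one of which can lie in $R$, forcing $|K_{R}|=n$. Where you diverge is in how the contradiction is extracted. The paper argues geometrically: a Jordan curve invariant under a non-trivial power of $V$ must separate the two fixed points of $V$, so exactly one boundary structure loop of $R$ can have non-trivial stabilizer and all others are stabilized only by the identity; since Lemmas \ref{lema3} and \ref{lema5} force every structure loop to have a non-trivial stabilizer, $R$ would have a single boundary loop, which is absurd. You instead run a Riemann--Hurwitz count on the orbifold covering $\overline{R}\to\overline{R}/K_{R}$ (the cut-open cylinder with one interior cone point of order $n$), getting $r=n+1$ boundary loops split into two $K_{R}$-orbits of sizes $n/d_{+}$ and $n/d_{-}$, whence $\{d_{+},d_{-}\}=\{1,n\}$ and some structure loop has trivial stabilizer, contradicting Lemma \ref{lema5} directly. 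Your route is more quantitative and recovers the exact boundary configuration ($n$ freely permuted loops plus one invariant one) without needing the separation property of invariant curves; its cost is the step you yourself flag, namely justifying that $\overline{R}/K_{R}$ is the cut cylinder (that points of $R$ identified under $K$ are already identified under $K_{R}$, and that the two ends of $E\setminus\{\alpha\}$ yield two distinct $K_{R}$-orbits of boundary loops). The paper's version is shorter because it never leaves the sphere: it only uses that two disjoint boundary loops of $R$ cannot both separate the fixed-point pair. Both arguments are sound and rest on the same two pillars, the single axis of a cyclic group and the fact that all structure loops have conjugate non-trivial stabilizers.
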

\begin{proof}
Let us assume that $K_{R}$ is a cyclic group, say generated by the elliptic element $V \in K$, $V \neq I$. As $R/K_{R}$ only has one cone point, this of order $n$, it follows that $V$ has order $n$ and only one of its fixed points of $V$ belongs to $R$. Then there is a unique structure loop $\delta$ on the boundary of $R$ whose $K$-stabilizer is the cyclic group $K_{R}=\langle V \rangle$. Then all of the other structure loops on the boundary of $R$ must be stabilized by the identity in $K_{R}$. As all the other boundary structure loops of $R$ are $K$-equivalent to $\delta$, each of them must also have a non-trivial stabilizer; that being a cyclic group of the same order as $H$, it follows that $R$ can only have one boundary structure loop. This is not possible.
\end{proof}

\begin{lemm}\label{lema7}
The group $K_{R}$ is either a dihedral group $D_{n}$ or ${\mathcal A}_{4}$.
\end{lemm}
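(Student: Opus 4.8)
The plan is to determine the quotient orbifold $\Sigma=\widehat{\mathbb C}/K_{R}$ explicitly, show it is a sphere with exactly three cone points of orders $n$, $m$ and $m$, and then read off the two admissible cases from Klein's classification of finite subgroups of $\mathbb M$.

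First I would check that $K_{R}$ is finite. Since $P\colon R\to P(R)$ is a homeomorphism (fact (2) preceding Lemma \ref{lema1}), the $\Gamma$-stabilizer of $R$ is trivial, so $K_{R}\cap\Gamma=\{1\}$ and $\Phi$ embeds $K_{R}$ into the finite group $G$; hence $K_{R}$ is a finite subgroup of $\mathbb M$. By the classification already invoked in Lemma \ref{lema1} it is cyclic, dihedral, $\mathcal A_{4}$, ${\mathfrak S}_{4}$ or $\mathcal A_{5}$, and by Lemma \ref{lema6} the cyclic case is excluded; the four remaining types have quotient signatures $(2,2,N)$, $(2,3,3)$, $(2,3,4)$ and $(2,3,5)$ respectively. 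It therefore suffices to compute the signature of $\Sigma$. Here I would use that $\pi|_{R}$ factors through a homeomorphism $R/K_{R}\cong \pi(R)=E\setminus\{\alpha\}$, which by Lemma \ref{lema3} and the hypothesis on $E$ is a cylinder carrying exactly one cone point, of order $n$, and having two boundary circles.

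Next I would decompose $\Sigma$. Writing $\widehat{\mathbb C}=R\sqcup\partial R\sqcup\bigsqcup_{j}\Delta_{j}$, where $\Delta_{j}$ is the disc behind the structure loop $\delta_{j}\subset\partial R$ (the $\Delta_{j}$ are pairwise disjoint by planarity), and passing to the quotient, one obtains $\Sigma=(R/K_{R})\cup\widehat D_{1}\cup\widehat D_{2}$, the two discs $\widehat D_{1},\widehat D_{2}$ capping the two boundary circles of the cylinder $R/K_{R}$; there are exactly two caps because these boundary circles correspond to the two $K_{R}$-orbits of structure loops on $\partial R$. The suborbifold $R/K_{R}$ contributes its single cone point, of order $n$. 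Each cap has the form $\Delta/(K_{R}\cap K_{\delta})=\Delta/K_{\delta}$ with $K_{\delta}$ non-trivial cyclic (Lemma \ref{lema5}); by disjointness of the $\Delta_{j}$, any element of $K_{R}$ fixing a point of $\Delta$ preserves $\Delta$ and hence lies in $K_{\delta}$, so the only elliptic fixed point inside $\Delta$ is the unique pole of a generator of $K_{\delta}$ lying there, and each cap carries exactly one cone point, of order $m:=|K_{\delta}|$. No cone point lies on the capping circles, since distinct structure loops are disjoint. By Lemma \ref{lema3}(1) all structure loops are $K$-equivalent, so their stabilizers are conjugate and both caps carry a cone point of the \emph{same} order $m$. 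Hence $\Sigma$ has signature $(n,m,m)$.

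Finally, comparing $(n,m,m)$ with the four admissible signatures, the repeated entry forces either the dihedral case with $m=2$, giving $K_{R}=D_{n}$, or the case $(n,m,m)=(2,3,3)$, giving $K_{R}=\mathcal A_{4}$ (with $n=2$); the signatures $(2,3,4)$ and $(2,3,5)$ of ${\mathfrak S}_{4}$ and $\mathcal A_{5}$ have three distinct entries and are thus excluded. I expect the delicate step to be precisely the identification of the signature as $(n,m,m)$ with the two non-central orders \emph{equal}: this equality is what eliminates ${\mathfrak S}_{4}$ and $\mathcal A_{5}$, and it rests on both the disjointness of the capping discs (to count exactly one cone point of order $m$ per cap) and the $K$-equivalence of all structure loops (to equate the two orders). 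Once the signature $(n,m,m)$ is established, the remaining comparison is immediate.
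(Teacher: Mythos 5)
Your proof is correct and takes essentially the same route as the paper: the paper also rules out ${\mathfrak S}_{4}$ and ${\mathcal A}_{5}$ by noting that $R/K_{R}$ carries the single cone point of order $n$, so the remaining two cone points of $\widehat{\mathbb C}/K_{R}$ must lie behind boundary structure loops and would then require stabilizers of distinct orders, contradicting the $K$-equivalence of all structure loops. Your write-up simply makes the signature computation $(n,m,m)$ explicit (and verifies finiteness of $K_{R}$ and the disjointness of the capping discs) where the paper argues the excluded cases by contradiction.
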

\begin{proof}
By Lemma \ref{lema6}, $K_{R}$ cannot be the trivial group nor a cyclic group. So the only possibilities we need to rule out are for it to be either ${\mathcal A}_{5}$ or ${\mathfrak S}_{4}$. If $K_{R}={\mathcal A}_{5}$, then (as $R/K_{R}$ only has one cone point) there should be two boundary structure loops $\delta_{1}$ and $\delta_{2}$, each one invariant under an element of order either $2$, $3$ or $5$, but of different orders. This is a contradiction to the fact that all structure loops have isomorphic $K$-stabilizers. The argument for $K_{R}={\mathfrak S}_{4}$ is similar.
\end{proof}

Using the same arguments at the end of the previous proof, for each of the possible groups $D_{n}$ and ${\mathcal A}_{4}$, one may obtain the following facts.

\noindent
(1) If $K_{R}=D_{n}=\langle a, b: a^{n}=b^{2}=(ab)^{2}=1\rangle$, then we may assume that the boundary structure loops are those whose stabilizers are the elements conjugated to $b$ and $ab$. The two fixed points of $a$ will project to the cone point of $E$. In this case, we have $r=2n$. Up to conjugation by a suitable M\"obius transformation, we may assume that $a=A$ and $b=B$ as in the theorem.

\noindent
(2) If $K_{R}={\mathcal A}_{4}=\langle a, b: a^{3}=b^{2}=(ab)^{3}=1\rangle$, then we may assume that the boundary structure loops are those whose stabilizers are the elements conjugated to $a$ and $ab$. The two fixed points of $b$ will project to the cone point of $E$. In this case $r=8$. Up to conjugation by a suitable M\"obius transformation, we may assume that $a=A$ and $b=B$ as in the theorem.

\medskip

Now, for each $j \in \{1,\ldots,r\}$ there is some $j_{i} \in \{1,\ldots, r\}$ and some $T_{j} \in K \setminus K_{R}$ so that $T_{j}(\delta_{j})=\delta_{j_{i}}$ and $T_{j}(R) \cap R = \emptyset$. We claim that $j_{i} \neq i$. In fact, if $j_{i}=j$, then $T_{j}$ will permute $R$ with the other structure region bounded by $\delta_{j}$; which is not possible by Lemma \ref{lema1}. In particular, each $T_{j}$ is a loxodromic transformation. 
If there exists some $k \in K_{R}$, $k \neq I$, such that $k(\delta_{j})=\delta_{j_{i}}$, then $k^{-1}T_{j} \in K$ will be an elliptic elements of order two keeping invariant $\delta_{j}$ and permuting $R$ with the other structural region having $\delta_{j}$ as boundary loop. This situation is not possible by Lemma \ref{lema1}.

In case (1),  the above asserts that there is a loxodromic element $T \in K$ so that $T$ sends the boundary loop around one of the fixed points of $b$ to the boundary loop around the fixed points of $ab$ which is not in the orbit by $a$. See Figure \ref{figura1} (for $n=3$ and $n=4$ ).

In case (2), as in the previous case, there is a loxodromic element $T \in K$ so that $T$ sends the boundary loop around one of the fixed points of $a$ to the boundary loop around the other fixed point of $a$. See Figure \ref{figura1}.
 All the above provides the desired result.

\begin{figure}[h]
\begin{center}
\includegraphics[width=3.5cm,keepaspectratio=true]{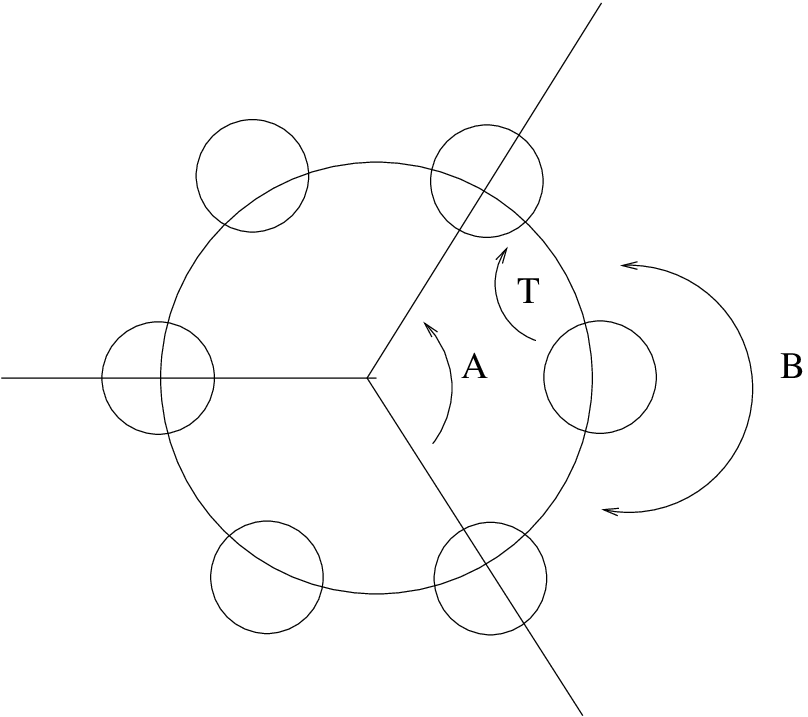} \;
\includegraphics[width=3.5cm,keepaspectratio=true]{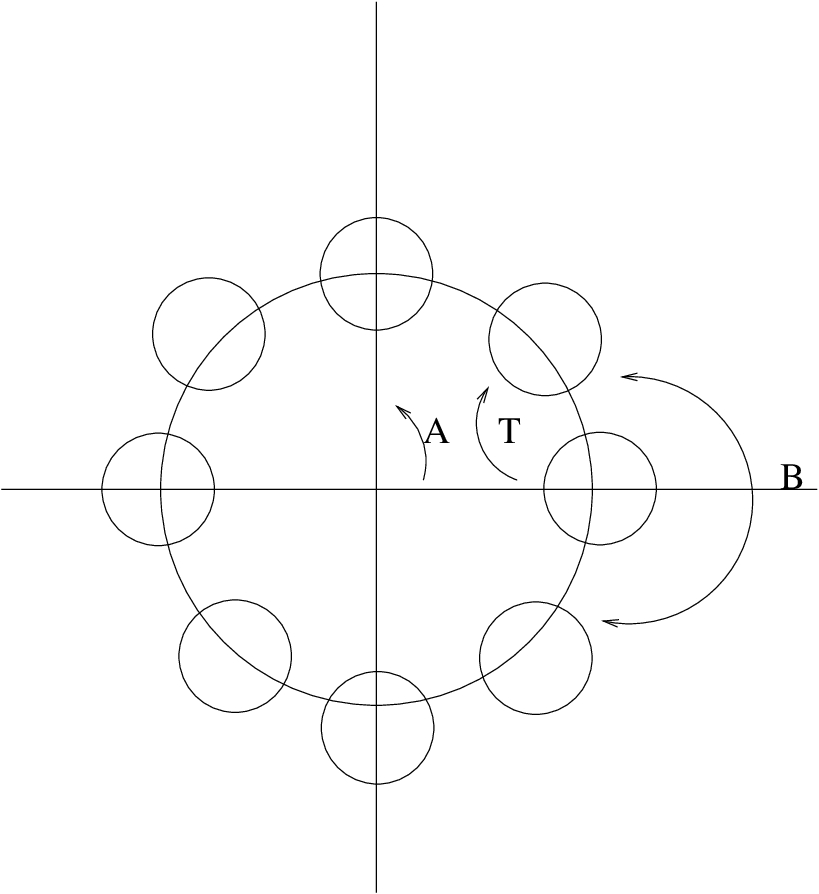} \; 
\includegraphics[width=3.5cm,keepaspectratio=true]{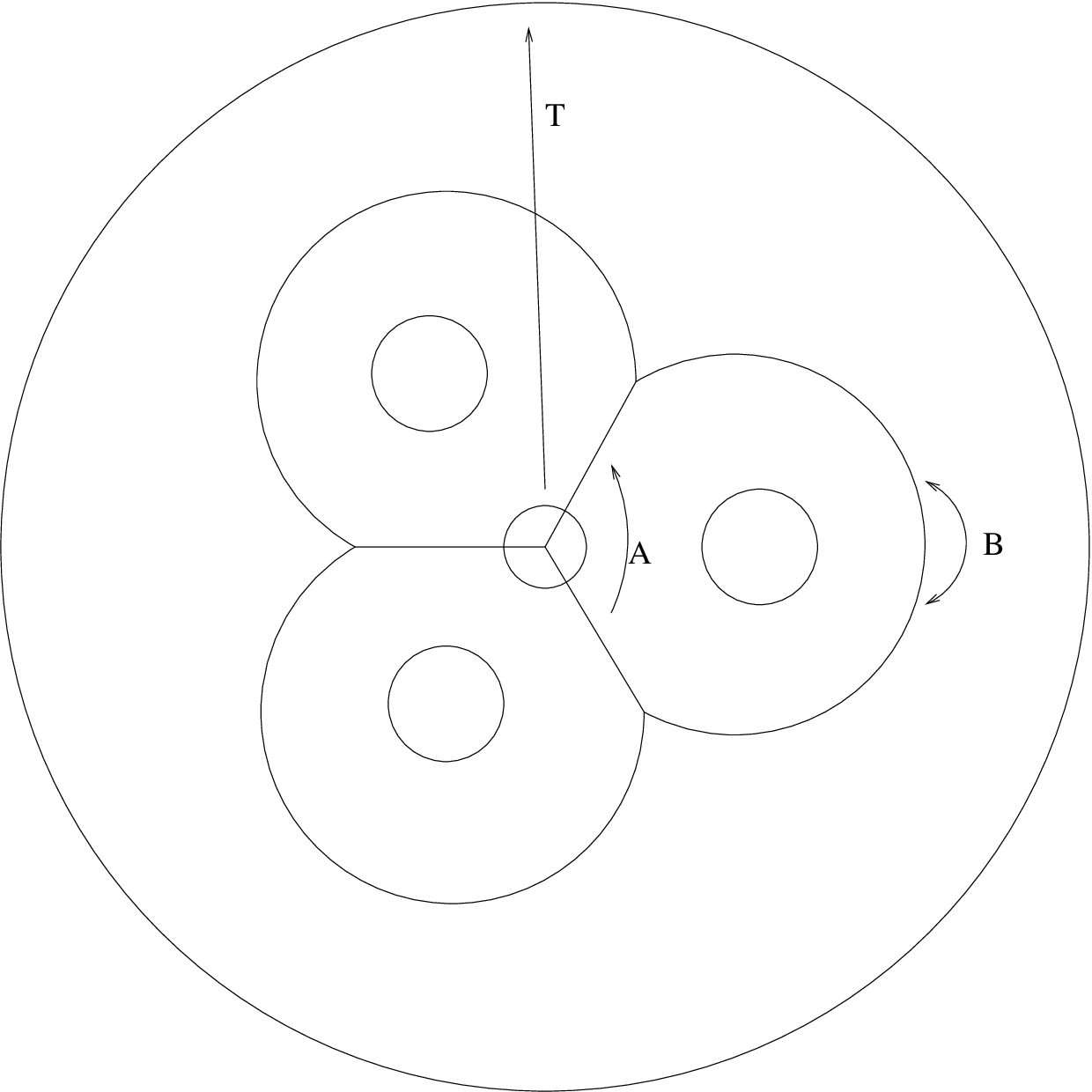}
\caption{$D_{3}$, $D_{4}$ and ${\mathcal A}_{4}$}
\label{figura1}
\end{center}
\end{figure}

\subsection{Proof of Part (2)}
Let $K$ be a Kleinian group which is constructed as in part (1) of the theorem. To see that $K$ is an origami-Schottky group, we need to find a finite index Schottky subgroup $\Gamma$ of $K$. 

\subsubsection{Construction in (\emph{a})}
If $n \geq 3$ odd,  then the subgroup
$$\Gamma=\langle C,  ACA^{-1}, A^{2} C A^{-2},\ldots, A^{-2} C A^{2}, A^{-1} C A \rangle,$$ where
$C=A^{(n-1)/2}T$, is a Schottky group of rank $n$, which a normal subgroup of index $2n$. This provides a regular origami of genus $n$ with automorphism group $G=D_{n}$.

If $n \geq 2$ even, then  the subgroup
$$\Gamma=\langle T,  ATA^{-1}, A^{2} T A^{-2},\ldots, A^{-2} T A^{2}, A^{-1} T A \rangle$$ is a Schottky group of rank $n$, which is not a normal subgroup, of index $2n$.  It is possible to find normal subgroups $N$ of finite index in $K$ being Schottky groups of rank $g>2$ and also being a subgroup of $\Gamma$ (for instance, $N=\cap_{k \in K} k \Gamma k^{-1}$). For $n=2$, these subgroups $N$ will provide examples of regular origamis $\eta: S=\Omega/N \to \Omega/K=E$ of genus $g>2$ with deck group $G=K/N$ of maximal order $4(g-1)$ (i.e. Hurwitz translation surfaces) admitting a non-normal subgroup $H$ so that $S/H$ is the genus two surface $\Omega/\Gamma$. 

\subsubsection{Construction in (\emph{b})}
The subgroup 
$$\Gamma=\langle T,  BTB, AB T BA^{-1},A^{-1}B T B A \rangle$$ is a Schottky group of rank $4$, which a normal subgroup of index $12$. This subgroup $\Gamma$  provides a regular origami of genus $g=4$ with automorphism group $G={\mathcal A}_{4}$ of maximal order $4(g-1)=12$ (see example 7(3) in page 9 of \cite{SW}). More generally, if $N$ is a torsion-free normal subgroup of index $d$ of $K$, then $N$ is a Schottky group of rank $g=1+d/4$ (so $d$ is necessarily divisible by $4$) defining a regular origami of maximal order $4(g-1)$ (more examples of Hurwitz translation surfaces). Examples of such $N$ are of the form $\Gamma^{(r)}[\Gamma:\Gamma]$, for any positive integer $r$ (in this case, $\Gamma/N \cong {\mathbb Z}_{r}^{n}$).

\section{Some Final Remarks}
\subsection{A relation to $p$-adic origamis}
There is a version of origamis at the level of $p$-adic fields, restricted to the case of Mumford curves. These correspond to $p$-adic Kleinian groups $K$ being finite extensions of a $p$-adic Schottky group $\Gamma$, such that $K$ unifomizes a Mumford curve of genus one with exactly one cone point \cite{Herrlich,Kremer} (see \cite{GvdP} for generalities on $p$-adic Schottky groups). In \cite{Kremer}, Kremer describes those $p$-adic regular origamis. In there it is proved that for $p>5$, the quotient group $K/\Gamma$ is either the dihedral group  $D_{n}$, for $n \geq 3$, or ${\mathcal A}_{4}$. The results of the present paper provide the corresponding counterpart at the level of origamis in the complex numbers and do not restrict to regular ones. The proof of Theorem \ref{const0} uses the existence of structural loops and structural regions which in some sense correspond, in the work in \cite{Kremer}, to a certain sub-tree of the Bruhat-Tits tree.

\subsection{A relation to $3$-manifolds}
Let $K$ be an origami-Schottky group, with a region of discontinuity $\Omega$,  and let $\Gamma$ be a finite index Schottky subgroup. If ${\mathbb H}^{3}$ denotes the hyperbolic $3$-space and ${\bf H}:={\mathbb H}^{3} \cup \Omega$, then ${\mathcal O}:={\bf H}/K$ is a $3$-dimensional compact orbifold (topologically equivalent to a genus one handleblody) whose conical set consists of an interior simple loop (with branch order $k$) union an arc (with branch order $n \geq 2$) connecting it to a point in the boundary and $(k,n) \in \{(2,n),(3,2)\}$. Similarly, $M={\bf H}/\Gamma$ is a handlebody of genus $g \geq 2$. It is well known that every compact $3$-manifold (without boundary) is a union of two handlebodies of the same genus glued along their boundaries (Heegaard splitting). By gluing to copies $M_{1}$ and $M_{2}$ of the above handlebodies (with the same $(k,n)$) one obtains a compact $3$-manifold $M_{1,2}$ without boundary and, similarly, by gluing along the boundaries the corresponding orbifolds ${\mathcal O}_{1}$ and ${\mathcal O}_{2}$ one obgains a compact $3$-orbifold ${\mathcal O}_{1,2}$ without boundary (the conical set is formed by a link of two loops, with the same cone order $k$, union a bridge connecting them of order $n$). This can be thought of as a $3$-dimensional version of an origami.


\end{document}